\numberwithin{equation}{section}
\newtheorem{proposition}{Proposition}[section]
\newtheorem{corollary}[proposition]{Corollary}
\newtheorem{theorem}[proposition]{Theorem}
\newtheorem{conjecture}[proposition]{Conjecture}
\newtheorem{lemma}[proposition]{Lemma}
\newcommand{\sortable}[2]{\ensuremath{W_{#1,#2}}}
\newcommand{\exactly}[2]{\ensuremath{E_{#1,#2}}}
\newcommand{\id}{\mbox{id}}
\newcommand{\sym}{\mathfrak{S}}
\newcommand{\st}{\mbox{\ensuremath{\ast}}}
\newcommand{\q}{\mbox{\ensuremath{\scriptstyle{?}}}}
\newcommand{\glob}[1]{\ensuremath{\langle\, #1\,\rangle}}
\newcommand{\IFF}{if and only if}
\newcommand{\fpoo}{forbidden pattern of order}
\newcommand{\ufpoo}{uninterrupted forbidden pattern of order}
\DeclareMathOperator{\ssc}{\mathrm{ssc}}
\title[Permutations sortable by $n-4$ passes through a stack]
{Permutations sortable by \\ $n-4$ passes through a stack}
\author{Anders Claesson, Mark Dukes and Einar Steingr\'imsson}
\thanks{The first and third authors were supported by grant no.\
  060005013 from the Icelandic Research Fund.  The second author
  acknowledges funding by the EC's Research Training Network
  `Algebraic Combinatorics in Europe', grant HPRN-CT-2001-00272 while
  at Universit\'{e} Bordeaux 1, France.}
\address{The Mathematics Institute,
  Reyjav\'ik University, 103 Reykjav\'ik, Iceland.}
\address{Science Institute, University of Iceland, 107
  Reykjav\'ik, Iceland.}
\begin{document}
\begin{abstract}
  We characterise and enumerate permutations that are sortable by
  $n-4$ passes through a stack.  We conjecture the number of
  permutations sortable by $n-5$ passes, and also the form of a
  formula for the general case $n-k$, which involves a polynomial
  expression.
\end{abstract}

\maketitle
\thispagestyle{empty}

\section{Background}
We view permutations as words without repeated letters; if $\pi$ is a
permutation of an $n$ element set and $\pi(i)=\pi_i$, then we write
$\pi=\pi_1\dots \pi_n$. The stack sorting operator $S$ can be defined
recursively on permutations of finite subsets of $\{1, 2,\dots\}$ as
follows. If $\pi$ is empty then $S(\pi) = \pi$.  If $\pi$ is nonempty
write $\pi$ as the concatenation $\pi =LnR$, where $n$ is the
greatest element of $\pi$ and $L$ and $R$ are the subwords to the left
and right of $n$ respectively. Then
$$
S(\pi)=S(L)S(R)n.
$$ 
For example, $S(42513)=24135$.
We say that a permutation $\pi$ is $k$-\emph{stack sortable} if
$S^k(\pi)=\id$, where $S^k=S\circ S^{k-1}$, $S^0$ is the identity
operator and $\id$ is the identity permutation $12\dots n$.  Let the
\emph{(stack sorting) complexity} of $\pi$, denoted $\ssc\pi$, be the
smallest $k$ such that $\pi$ is $k$-stack sortable. Let $\sym_n$ be
the set of permutations of $\{1,\dots,n\}$. For the permutations in
$\sym_3$ we have
$$
\begin{array}{rc|c|c|c|c|c}
  \pi    \;\,=\!\!\! & 123 & 132 & 213 & 231 & 312 & 321 \\
  \ssc\pi\;\,=\!\!\! &  0  &  1  &  1  &  2  &  1  &  1 
\end{array}
$$

Let $\sortable{n}{k}$ be the set of all $k$-stack sortable
permutations in $\sym_n$; in other words, $\sortable{n}{k}$ is the
set of permutations in $\sym_n$ whose complexity is at most
$k$.  Let $\exactly{n}{k}$ be the set of permutations in
$\sym_n$ whose complexity is exactly $k$. Note that
\begin{align*}
  \exactly{n}{k}  &= \sortable{n}{k}-\sortable{n}{k-1}; \\
  \sortable{n}{k} &= \sym_n - (\exactly{n}{k+1}\cup\dots\cup\exactly{n}{n-1}).
\end{align*}

It is easy to see that $\sortable{n}{n-1}=\sym_n$.
Knuth~\cite[2.2.1.5]{knuth} leaves as an exercise to the reader 
to show that $\sortable{n}{1}=\sym_n(231)$.
In his PhD thesis West \cite{west.phd} showed that
$$
\sortable{n}{2}=\sym_n(2341,3\overline{5}241),
$$ 
where $\sym_n(2341,3\overline{5}241)$ is the set of permutations in
$\sym_n$ that avoid the pattern $2341$ and the ``barred'' pattern
$3\overline{5}241$; for information on these, see \cite{es-survey},
especially Section~7.  West also showed that $\sortable{n}{n-2}$ are
precisely those permutations that do not have suffix $n1$. This last
statement is easily shown by proving that the permutations in
$\exactly{n}{n-1}$ are those with suffix $n1$.

In addition, West characterized $\exactly{n}{n-2}$. To state that
result it is convenient to introduce some notation for special sets of
words over the alphabet $\{1,2,\dots\}$. Let an asterisk ($\st$) stand
for any word of zero or more characters, and let a question mark
($\q$) stand for any single letter. These conventions are adopted from
the so called glob patterns in computer science. For a word $w$ over
$\{\st,\q\}\cup\{1,2,\dots\}$, let $\glob{w}$ denote the set of
words of the form $w$. For instance, \glob{\st n1} consists of all
words with suffix $n1$, and $\sym_5\cap\glob{\st 51\q}=\{ 23514,
24513, 32514, 34512, 42513, 43512\}$.  Let
$$\glob{w_1,\dots,w_k}=\glob{w_1}\cup\dots\cup\glob{w_k}.
$$ 
West's characterisation of $\exactly{n}{n-1}$ and $\exactly{n}{n-2}$
can then be stated as in the following two lemmas, which follow from
Theorems 4.2.4 and 4.2.17 in \cite{west.phd} and their proofs.
\begin{lemma}\label{yada1} For all $n\geq 2$,
  $$\exactly{n}{n-1} = \sym_n\cap\glob{\st n1}.
  $$
Thus, the cardinality of $\exactly{n}{n-1}$ is  $(n-2)!$.
\end{lemma}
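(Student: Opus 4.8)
The plan is to argue by induction on $n$, driven by two elementary properties of the operator $S$ together with the observation that $S(w)$ always ends in $\max(w)$ for a nonempty word $w$ (since writing $w=L'mR'$ with $m=\max(w)$ gives $S(w)=S(L')S(R')m$). First I would record the two facts that set up the reduction. (i) For $\pi\neq\id$ one has $\ssc(S(\pi))=\ssc\pi-1$, which is immediate from the definition of $\ssc\pi$ as the least $k$ with $S^k\pi=\id$. (ii) If $\sigma n$ is a word in $\sym_n$ ending in its maximum, then $S(\sigma n)=S(\sigma)n$, and iterating gives $S^k(\sigma n)=S^k(\sigma)n$; hence $\ssc(\sigma n)=\ssc\sigma$ with the right-hand complexity computed in $\sym_{n-1}$ — a largest letter parked at the end is irrelevant to sorting. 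Since $S(\pi)=S(L)S(R)n$ always ends in $n$, I would write $S(\pi)=\tau n$ with $\tau\in\sym_{n-1}$; combining (i) and (ii) then yields, for $\pi\neq\id$, the equivalence $\ssc\pi=n-1\iff\ssc\tau=n-2$, i.e. $\tau\in\exactly{n-1}{n-2}$.

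By the induction hypothesis, $\tau\in\exactly{n-1}{n-2}$ holds exactly when $\tau$ ends in $(n-1)1$, that is, exactly when $S(\pi)$ ends in $(n-1)1n$. So everything reduces to the combinatorial lemma that, for $n\ge 3$,
$$\pi \text{ ends in } n1 \iff S(\pi)\text{ ends in }(n-1)1n.$$
For the forward direction, $\pi$ ending in $n1$ forces $\pi=Ln1$, whence $S(\pi)=S(L)\,1\,n$; here $L$ is a permutation of $\{2,\dots,n-1\}$, so $S(L)$ ends in its maximum $n-1$, and the suffix $(n-1)1n$ appears. For the converse I would examine the letter directly preceding the final $n$ in $S(\pi)=S(L)S(R)n$: if $R=\emptyword$ that letter is $\max(L)=n-1\neq 1$ (using $n\ge 3$), a contradiction; hence $R\neq\emptyword$ and the letter equals $\max(R)$, so equating it to $1$ forces $R=1$, which says precisely that $\pi$ ends in $n1$.

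Assembling the chain
$$\ssc\pi=n-1 \iff \tau\in\exactly{n-1}{n-2} \iff \tau\text{ ends in }(n-1)1 \iff \pi\text{ ends in }n1$$
then proves $\exactly{n}{n-1}=\sym_n\cap\glob{\st n1}$ for $\pi\neq\id$; the case $\pi=\id$ is consistent since both conditions fail (as $\ssc\id=0<n-1$ and $\id$ ends in $(n-1)n$). The base case $n=2$ is checked by hand: $\exactly{2}{1}=\{21\}=\sym_2\cap\glob{\st 21}$. The cardinality statement is then routine, since a permutation ending in $n1$ is determined by an arbitrary arrangement of $\{2,\dots,n-1\}$ in its first $n-2$ positions, giving $(n-2)!$.

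I expect the only genuine care to be needed in the converse half of the combinatorial lemma — correctly excluding the case $R=\emptyword$ and pinning down $R=1$ from the single letter preceding $n$ — and in shepherding the small instances ($n=2,3$ and $\pi=\id$) so that they are not quietly assumed away by the induction. Everything else is bookkeeping built on the single structural fact that $S$ sends the maximum of each block to the end.
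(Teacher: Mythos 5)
Your argument is correct, but it follows a genuinely different route from the paper, which in fact gives no proof of its own: Lemmas~\ref{yada1} and~\ref{yada2} are quoted from West's thesis (Theorems 4.2.4 and 4.2.17), and the proof implicit in the paper's toolkit is the one via Lemma~\ref{lemma}: a forbidden pattern of order $n-2$ must be witnessed by $(c,a)=(n,1)$ with $B=\{2,\dots,n-1\}$, it is automatically uninterrupted since no entry exceeds $n$, so $\ssc(\pi)>n-2$ holds precisely when every element of $\{2,\dots,n-1\}$ precedes $n$ and $n$ precedes $1$, i.e.\ when $\pi$ has suffix $n1$; together with $\sortable{n}{n-1}=\sym_n$ this yields the lemma in one non-inductive step. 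You instead build a self-contained induction from three elementary facts, all of which you state and use correctly: $\ssc(S(\pi))=\ssc(\pi)-1$ for $\pi\neq\id$, the inertness of a trailing maximum ($S^k(\sigma n)=S^k(\sigma)n$, hence $\ssc(\sigma n)=\ssc(\sigma)$), and the suffix lemma that for $n\geq 3$ the permutation $\pi$ ends in $n1$ \IFF\ $S(\pi)$ ends in $(n-1)1n$ --- whose converse you handle with the right care, ruling out $R=\emptyword$ (the letter before the final $n$ would then be $n-1$) and forcing $R=1$ from $\max(R)=1$; the cases $\pi=\id$ and $n=2$ are also covered, so the chain $\ssc\pi=n-1\iff\tau\in\exactly{n-1}{n-2}\iff\pi\in\glob{\st n1}$ is sound, and the count $(n-2)!$ is immediate. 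The trade-off: the forbidden-pattern proof is shorter and needs no induction but leans on West's machinery, whose justification is itself nontrivial; your proof uses nothing beyond the recursive definition of $S$ and would stand on its own, at the cost of an induction and the auxiliary suffix lemma.
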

\begin{lemma}\label{yada2} For all $n\geq 4$,
  $$\exactly{n}{n-2} = \sym_n \cap 
    \glob{\;\st n2,\;\st(n-1)1n,\; \st n1\q,\; \st n\q 1,
      \; \st n\st (n-2)\st (n-1)1\;}.
    $$
Thus, the cardinality of $\exactly{n}{n-2}$ is  $(n-3)!(7n-12)/2$.
\end{lemma}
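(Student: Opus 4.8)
The plan rests on a single structural feature of $S$. For every nonempty $\pi$ the word $S(\pi)=S(L)S(R)n$ ends in its largest letter $n$, and once a permutation ends in its largest letter that letter never moves under further applications of $S$. Writing $S(\pi)=\sigma n$ with $\sigma\in\sym_{n-1}$, this gives $S^k(\pi)=S^{k-1}(\sigma)\,n$ for all $k\ge1$, hence
\[
\ssc\pi=1+\ssc\sigma\qquad(\pi\neq\id).
\]
I would isolate this as a preliminary lemma (immediate from the recursive definition of $S$). Its consequence is that $\ssc\pi=n-2$ exactly when $\ssc\sigma=n-3$, so that, writing $\rho(\pi)$ for $S(\pi)$ with its last letter deleted,
\[
\pi\in\exactly{n}{n-2}\iff\rho(\pi)\in\exactly{n-1}{n-3}.
\]
This turns the statement into an induction on $n$. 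The base case $n=4$ is settled by listing the eight permutations of complexity $2$ and matching them against the patterns. For $n\ge5$ the inductive hypothesis describes $\exactly{n-1}{n-3}$ by the five patterns at level $n-1$, and Lemma~\ref{yada1} gives $\exactly{n-1}{n-2}=\sym_{n-1}\cap\glob{\st(n-1)1}$; since $\ssc\pi\ge n-2$ is equivalent to $\rho(\pi)\in\exactly{n-1}{n-2}\cup\exactly{n-1}{n-3}$, it suffices to compute the $\rho$-preimage of this union and then discard $\glob{\st n1}=\exactly{n}{n-1}$.

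The heart of the argument, and the step I expect to be hardest, is exactly this inversion of one pass of $S$ over the suffix patterns: I must show that the set of $\pi\in\sym_n$ for which $\rho(\pi)$ lies in one of the level-$(n-1)$ patterns is precisely the union of the five level-$n$ patterns. I would organise it by the position of $n$ in $\pi$. Writing $\pi=LnR$, the letters of $S(\pi)$ preceding the final $n$ are those of $S(L)S(R)$, so the tail of $S(\pi)$ is controlled by $S(R)$ (or by the end of $S(L)$ when $R$ is short), and one tracks where the letters $n-1,n-2$ and $1,2$ are sent by this single pass. The four cases ``$n$ last'', ``$n$ in position $n-1$'', ``$n$ in position $n-2$'', and ``$n$ earlier'' correspond to the patterns $\st(n-1)1n$, $\st n2$, the pair $\st n1\q,\ \st n\q1$, and $\st n\st(n-2)\st(n-1)1$ respectively; the first case is in fact settled by Lemma~\ref{yada1} at level $n-1$, since there $\rho(\pi)=S(L)$ and the condition reduces to $L\in\glob{\st(n-1)1}$. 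The delicate point, which rules out a naive pattern-by-pattern bookkeeping, is that a level-$n$ pattern need not feed into the same-indexed level-$(n-1)$ pattern — for instance a $\pi$ of the form $\st n\q1$ produces a $\rho(\pi)$ ending in $(n-2)1(n-1)$ — so the equivalence has to be proved at the level of the whole union, with a careful reading of how $S$ rearranges the last few letters.

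The cardinality is then the tractable part. The five patterns are pairwise disjoint: $\st n2$, $\st(n-1)1n$, and $\st n1\q$ (equivalently $\st n\q1$) place $n$ in positions $n-1$, $n$, and $n-2$ respectively, the two patterns with $n$ in position $n-2$ are incompatible because one forces $\pi_{n-1}=1$ and the other $\pi_n=1$, and $\st n\st(n-2)\st(n-1)1$ forces $\pi_{n-1}=n-1$ and places $n$ strictly before position $n-2$. Counting each pattern directly gives
\[
(n-2)!,\quad(n-3)!,\quad(n-2)!,\quad(n-2)!,\quad(n-2)!/2
\]
for the five families in order, the last count being $(n-2)!/2$ because, among the $n-2$ letters placed before the suffix $(n-1)1$, exactly half have $n$ before $n-2$. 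Summing, $3(n-2)!+\tfrac12(n-2)!+(n-3)!=\tfrac72(n-2)!+(n-3)!=(n-3)!(7n-12)/2$, which both confirms the disjointness and yields the stated count.
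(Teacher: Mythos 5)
The paper itself offers no proof of this lemma: it is quoted from West's thesis (Theorems 4.2.4 and 4.2.17), so your attempt has to be judged on its own merits rather than against an argument in the text. Your overall strategy is sound and genuinely different from the forbidden-pattern machinery the paper relies on elsewhere: the observation that $S(\pi)=\sigma n$ with $\sigma\in\sym_{n-1}$ and hence $\ssc\pi=1+\ssc\sigma$ for $\pi\neq\id$ is correct, the resulting equivalence $\pi\in\exactly{n}{n-2}\iff\rho(\pi)\in\exactly{n-1}{n-3}$ sets up a legitimate induction, and the final disjointness and counting argument (three families of size $(n-2)!$, one of size $(n-3)!$, one of size $(n-2)!/2$, summing to $(n-3)!(7n-12)/2$) is complete and correct, conditional on the characterisation.

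The gap is that the characterisation itself is never proved. The step you label ``the heart of the argument'' --- showing that the $\rho$-preimage of the five level-$(n-1)$ patterns together with $\glob{\st (n-1)1}$, minus $\glob{\st n1}$, is exactly the union of the five level-$n$ patterns --- is only announced, with a case split by the position of $n$ and a bare assertion of which pattern each case produces. That assertion \emph{is} the content of the lemma, so as written the core of the proof is missing. To close it you must, in each case, write $\pi=LnR$, determine the suffix of $S(L)S(R)$, and check both inclusions against all five level-$(n-1)$ patterns; for instance, when $n$ sits in a position $\leq n-3$ one argues that $S(R)$ ends in $\max R$ and fills the last $|R|\geq 3$ positions of $\rho(\pi)$, which rules out four of the five patterns and forces $R=B(n-2)C(n-1)1$, i.e.\ the pattern $\st n\st(n-2)\st(n-1)1$; the cases with $n$ in positions $n-2$ and $n-1$ need a similar two-way analysis, including the exclusion of suffixes like $n1$ and $n(n-1)$. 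A warning sign that this bookkeeping has not actually been done is your illustrative claim that $\pi\in\glob{\st n\q 1}$ yields $\rho(\pi)$ ending in $(n-2)1(n-1)$: in fact $\rho(\pi)$ ends in $(n-1)1\pi_{n-1}$ whenever $\pi_{n-1}\neq n-1$, and only in the subcase $\pi_{n-1}=n-1$ does it end in $(n-2)1(n-1)$. The plan is workable, but until the case analysis is written out the lemma has not been proved.
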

By subtracting the cardinalities in Lemmas \ref{yada1} and \ref{yada2}
from $n!$, we get the following result.
\begin{proposition} For all $n\geq 4$ the cardinality of $W_{n,n-3}$ is 
$$\dfrac{(n-3)!}{2}\left(2n^3-6n^2-5n+16\right).$$
\end{proposition}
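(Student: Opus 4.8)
The plan is to use the second of the two set identities stated in the excerpt, namely
$$
  \sortable{n}{k} = \sym_n - (\exactly{n}{k+1}\cup\dots\cup\exactly{n}{n-1}),
$$
specialised to $k = n-3$. Since the only complexity classes strictly above $n-3$ and below $n-1$ inclusive are $\exactly{n}{n-2}$ and $\exactly{n}{n-1}$, this collapses to
$$
  \sortable{n}{n-3} = \sym_n - \bigl(\exactly{n}{n-2}\cup\exactly{n}{n-1}\bigr).
$$
Because every permutation has a single well-defined value $\ssc\pi$, the sets $\exactly{n}{n-2}$ and $\exactly{n}{n-1}$ are disjoint, so the count is purely additive and
$$
  \lvert\sortable{n}{n-3}\rvert = n! - \lvert\exactly{n}{n-1}\rvert - \lvert\exactly{n}{n-2}\rvert .
$$

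Next I would substitute the two cardinalities furnished by Lemmas~\ref{yada1} and~\ref{yada2}, that is $\lvert\exactly{n}{n-1}\rvert = (n-2)!$ and $\lvert\exactly{n}{n-2}\rvert = (n-3)!(7n-12)/2$, both of which are valid in the required range $n\geq 4$. Factoring $(n-3)!$ out of all three terms, by writing $n! = n(n-1)(n-2)\,(n-3)!$ and $(n-2)! = (n-2)\,(n-3)!$, reduces the whole problem to checking the single polynomial identity
$$
  n(n-1)(n-2) - (n-2) - \frac{7n-12}{2} = \frac{2n^3 - 6n^2 - 5n + 16}{2}.
$$

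The remaining work is a routine expansion: clearing the denominator on the left and collecting like terms yields $2n^3 - 6n^2 - 5n + 16$, matching the numerator on the right, after which multiplying back by $(n-3)!$ gives exactly the claimed expression $\tfrac{(n-3)!}{2}\bigl(2n^3 - 6n^2 - 5n + 16\bigr)$. There is no genuine obstacle in this deduction; the only points requiring care are the disjointness of the two complexity classes (so that no inclusion--exclusion correction is needed) and the bookkeeping of the factorial factors, in particular the sign on the $(n-2)!$ term. All the substantive content of the proposition is carried by Lemmas~\ref{yada1} and~\ref{yada2}, equivalently by West's Theorems 4.2.4 and 4.2.17, so the result follows immediately once those are in hand.
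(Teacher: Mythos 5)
Your proposal is correct and follows exactly the route the paper itself takes: subtracting the cardinalities $\lvert\exactly{n}{n-1}\rvert=(n-2)!$ and $\lvert\exactly{n}{n-2}\rvert=(n-3)!(7n-12)/2$ from $n!$ and simplifying. The polynomial bookkeeping checks out, so nothing further is needed.
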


\section{Permutations requiring exactly $(n-3)$-stack sorts}

\begin{theorem}\label{main}
  For all $n\geq 6$, the set of permutations $\exactly{n}{n-3}$ are
  those given in column labeled ``Type'' of Table~\ref{thelist}.
  The number of such permutations is
  \begin{eqnarray}
    \frac{(n-4)!}{3} \left( 47 \binom{n-6}{2} + 194 \binom{n-6}{1} +
    297 \right) \label{threenumbers}.
  \end{eqnarray}
\end{theorem}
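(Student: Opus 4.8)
The plan is to (i) establish the structural characterisation recorded in the ``Type'' column of Table~\ref{thelist} and (ii) enumerate the permutations of each type. The engine of the argument is the identity $\ssc(S(\pi))=\ssc(\pi)-1$ for every $\pi\neq\id$ (if $S^{k}(\pi)=\id$ but $S^{k-1}(\pi)\neq\id$, then $S^{k-1}(S(\pi))=\id$ while $S^{k-2}(S(\pi))\neq\id$), together with $\ssc(wn)=\ssc(w)$ for $w\in\sym_{n-1}$ (since $S^{j}(wn)=S^{j}(w)n$). Because $S(LnR)=S(L)S(R)n$ always places $n$ last, these combine to give
$$\exactly{n}{n-3}=\{\,\pi\in\sym_n : S(\pi)=wn,\ w\in\exactly{n-1}{(n-1)-3}\,\},$$
which expresses the size-$n$, gap-$3$ class as the full $S$-preimage of the size-$(n-1)$, gap-$3$ class with a maximal letter appended.

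This makes induction on $n$ natural, with the gap held fixed at $3$ and the base case $n=6$ (together with the small cases that seed the recursion) verified directly. For the inductive step I would, for each $w\in\exactly{n-1}{(n-1)-3}$ --- a Type of Table~\ref{thelist} at size $n-1$ by hypothesis --- describe the fibre $S^{-1}(wn)$ explicitly, i.e.\ the set of permutations that stack-sort to $wn$, and then check that these fibres, as $w$ ranges over all size-$(n-1)$ Types, are exactly the size-$n$ Types. Since the only letters constrained by the glob patterns are the largest few ($n,n-1,n-2,n-3$) and the smallest few ($1,2,3$), while the remaining $n-4$ letters are free, the fibre computation reduces to tracking how a single pass of $S$ can produce the prescribed extreme-letter configuration, a finite bounded analysis; this is what pins down the precise list of Types and accounts for the $(n-4)!$ factor in \eqref{threenumbers}.

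With the Types in hand, the enumeration is bookkeeping. For each pattern I count its matches in $\sym_n$: the extreme letters occupy prescribed or semi-prescribed slots, the placements of the semi-free letters among the remaining positions contribute the coefficients $\binom{n-6}{2}$ and $\binom{n-6}{1}$, and the genuinely free letters fill the rest. Summing over all Types and correcting by inclusion--exclusion for permutations that match several patterns at once (exactly as $\glob{\st n1\q}$ and $\glob{\st n\q1}$ already overlap in Lemma~\ref{yada2}) collects into the constants $47$, $194$, $297$ and yields \eqref{threenumbers}.

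The principal obstacle is the fibre description in the inductive step, equivalently the completeness of Table~\ref{thelist}: verifying that a listed pattern lies in $\exactly{n}{n-3}$ is a direct computation, but showing that no unlisted permutation sorts in exactly $n-3$ passes requires controlling every way a pass of $S$ can create or destroy the critical extreme-letter configuration, and the number of sub-cases is substantial. A lesser, purely combinatorial hazard is the overlap accounting in the enumeration, which must be organised so that each permutation is counted exactly once.
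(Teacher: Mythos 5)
Your reduction identity is correct: $\ssc(S(\pi))=\ssc(\pi)-1$ for $\pi\neq\id$ and $\ssc(wn)=\ssc(w)$ do give $\exactly{n}{n-3}=\{\pi : S(\pi)=wn,\ w\in\exactly{n-1}{(n-1)-3}\}$, and an induction on $n$ with the gap fixed at $3$ is in principle a viable frame. The genuine gap is that the entire substance of the theorem is left inside the step you describe as ``a finite bounded analysis''. That description is not accurate: several of the Types in Table~\ref{thelist}, for instance 5(b)--5(h) such as $\glob{\st n\st(n-1)\st(n-3)\st(n-2)1}$, constrain the relative order of $n$, $n-1$, $n-2$, $n-3$ at positions arbitrarily deep inside $\pi$, because which letters land in the last few slots of $S(\pi)$ depends on where the large letters sit anywhere in $\pi$ (unrolling $S(LnR)=S(L)S(R)n$ recursively). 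So the fibre $S^{-1}(wn)$ cannot be read off from the last few positions of $\pi$; computing it forces you to condition on the position of $n$ and on the relative positions of $n-1$ and $n-2$ throughout the permutation and to track one or two applications of $S$ case by case --- exactly the lengthy analysis you flag as the ``principal obstacle'' but never carry out. Likewise the enumeration: the per-Type counts, their disjointness (note 4(a) is phrased so that the rows of Table~\ref{thelist} can simply be summed), and the constants $47$, $194$, $297$ in \eqref{threenumbers} are asserted, not derived. As written, the proposal is a strategy outline, not a proof of either the characterisation or the count.

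For comparison, the paper does not induct on the theorem at all. It conditions on the position of $n$ in $\pi$; when $\pi_n=n$ it drops the last letter and appeals to West's characterisation of $\exactly{n-1}{(n-1)-2}$ (Lemma~\ref{yada2}); in the remaining cases it applies $S$ once or twice and compares the result with the known descriptions of $\exactly{m}{m-1}$ and $\exactly{m}{m-2}$ (Lemmas~\ref{yada1} and~\ref{yada2}), and it uses the forbidden-pattern criteria of Lemma~\ref{lemma} to rule out quickly which pairs of entries can witness a \fpoo\ $n-4$, which prunes most configurations before any fibre-type computation is needed. Your scheme lacks any analogue of Lemma~\ref{lemma}, so completing it would mean reconstructing essentially that same case analysis by brute unrolling of $S$, plus a direct verification of the base case $n=6$ and the bookkeeping for the counts; none of this is yet done.
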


To prove this we require some terminology and results from
West~\cite[\S 4.2]{west.phd}.  A \emph{forbidden pattern of order $k$}
in a permutation $\pi$ is a triple $(B,c,a)$, where $B$ is a
subsequence of length $k$ in $\pi$ and $(c,a)$ is a pair of entries in
$\pi$ such that for every $b\in B$ the subsequence $bca$ is an
occurrence of the pattern $231$, that is, $a<b<c$. In such a situation
we say that the pair $(c,a)$ \emph{witnesses} the forbidden pattern
$B$. We call a forbidden pattern $(B,c,a)$ \emph{uninterrupted} if
there is no subsequence $bxb'$ in $\pi$ where $b,b'\in B$ and $x>c$.

\begin{lemma}{\cite[Theorems 4.2.10 and 4.2.14]{west.phd}}\label{lemma}
  Let $\pi$ be a permutation. 
  \begin{itemize}
  \item[(i)] $\ssc(\pi)\leq k$ if $\pi$ does not contain any \fpoo\ $k$;
  \item[(ii)] $\ssc(\pi)> k$ if $\pi$ contains an \ufpoo\ $k$.
  \end{itemize}
\end{lemma}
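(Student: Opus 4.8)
The plan is to prove both parts by induction on $k$, in each case reducing the claim to a single statement about how one pass of $S$ transforms forbidden patterns. For the base cases I use Knuth's theorem, recorded in the introduction: $\ssc(\pi)\le 1$ \IFF\ $\pi$ avoids $231$, which is exactly the statement that $\pi$ has no \fpoo\ $1$; at the bottom of the induction for (ii) I use that an \ufpoo\ $0$ is simply an inversion, so its presence in $S^{k}(\pi)$ certifies $S^{k}(\pi)\neq\id$ and hence $\ssc(\pi)>k$. The two inductive steps to establish are: \emph{(I)} if $S(\pi)$ contains a \fpoo\ $k-1$, then $\pi$ contains a \fpoo\ $k$; and \emph{(II)} if $\pi$ contains an \ufpoo\ $k$, then $S(\pi)$ contains an \ufpoo\ $k-1$. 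Granting these, part (i) follows by contraposition and induction (no \fpoo\ $k$ in $\pi$ forces no \fpoo\ $k-1$ in $S(\pi)$, whence $\ssc(S(\pi))\le k-1$ and so $\ssc(\pi)\le k$), while part (ii) follows by iterating (II) $k$ times and reading off the final inversion.

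The key technical device for both steps is an explicit description of a single pass. Unwinding the recursion $S(\pi)=S(L)S(R)n$, one checks that for positions $i<j$ the entry $\pi_i$ precedes $\pi_j$ in $S(\pi)$ \IFF\ $\pi_i$ is \emph{not} the maximum of $\{\pi_i,\pi_{i+1},\dots,\pi_j\}$; equivalently, the pass moves $\pi_j$ in front of $\pi_i$ precisely when $\pi_i$ exceeds every entry strictly to its right up to and including $\pi_j$. (Indeed $\pi_i$ is emitted after $\pi_j$ exactly when it plays the role of the letter ``$n$'' for the innermost recursive block still containing both positions.) All the position bookkeeping below is carried out with this criterion.

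For step (I) I argue by contraposition, lifting a pattern upward. Let $(B',c',a')$ be a \fpoo\ $k-1$ in $S(\pi)$, so $c'>a'$ and $c'$ precedes $a'$ in $S(\pi)$. By the criterion this order cannot have arisen from a reversal (that would require $a'>c'$), so $c'$ already precedes $a'$ in $\pi$; their order being preserved despite $c'>a'$ then forces some entry $c>c'$ to lie strictly between them in $\pi$. Now $c'$ itself satisfies $a'<c'<c$ and lies before $c$, so $(c,a')$ witnesses a $231$ with $c'$. The remaining task is to show that the $k-1$ entries of $B'$ can be adjoined to $c'$ to form a set of size $k$ all witnessed by the single pair $(c,a')$ in $\pi$; the value inequalities are immediate from $b'<c'<c$ and $a'<b'$, and the positional claim that each $b'$ precedes $c$ in $\pi$ is extracted by applying the criterion to the pairs $(b',c')$.

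Step (II) is the heart of the argument and is where the \emph{uninterrupted} hypothesis is essential. Let $(B,c,a)$ be an \ufpoo\ $k$ in $\pi$. Since every $b\in B$ precedes $c$ and the larger entry $c$ lies between $b$ and $a$, the criterion shows that every element of $B$ still precedes $a$ in $S(\pi)$, so the small witness $a$ survives the pass. The natural candidate for the reduced pattern promotes the largest entry $m=\max B$ to the role of big witness, keeps $a$, and takes $B\setminus\{m\}$ as the new set, the point being that the pass ``consumes'' the old witness $c$ (which is sorted past $a$) and lowers the order by one. To validate this candidate one must check, in $S(\pi)$, that each element of $B\setminus\{m\}$ precedes $m$ and that no entry exceeding $m$ intrudes between two of them; this is exactly where uninterruptedness enters, since the only obstruction to the order of two $B$-entries after the pass is an entry sitting between them in $\pi$, and the hypothesis forbids any such entry from exceeding $c$. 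The main obstacle, and the part demanding genuine care, is the bookkeeping here: an intermediate entry whose value lies between $m$ and $c$ can disrupt the naive recipe, so one must either enlarge $B$ to a saturated pattern for the fixed witness $(c,a)$—after which any such disrupting entry would itself belong to $B$—or instead promote that intermediate entry to be the new big witness, re-checking in each case that the reduced pattern is again uninterrupted. Carrying out this case analysis, which is the substance of West's Theorems 4.2.10 and 4.2.14, completes the induction.
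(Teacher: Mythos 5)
Your part (i) is actually complete and correct, and since the paper itself offers no proof of this lemma (it is quoted from West's thesis, Theorems 4.2.10 and 4.2.14), the benchmark is the cited theorems rather than any in-paper argument. Your order-preservation criterion --- for $i<j$, $\pi_i$ precedes $\pi_j$ in $S(\pi)$ \IFF\ $\pi_i$ is not the maximum of $\{\pi_i,\dots,\pi_j\}$ --- is right and follows directly from the recursion $S(LnR)=S(L)S(R)n$, and your lifting step (I) checks out in every detail: the criterion forces $c'$ to precede $a'$ already in $\pi$, forces an entry $c>c'$ between them, and the same criterion applied to each pair $(c',b')$ shows every $b'\in B'$ precedes $c$ in $\pi$, so $(B'\cup\{c'\},c,a')$ is a \fpoo\ $k$. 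That is a clean, self-contained proof of (i).

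The genuine gap is step (II), which carries all of part (ii): you correctly identify the obstruction (an entry with value strictly between $m=\max B$ and $c$ sitting between $m$ and a later element of $B$ defeats the naive candidate $(B\setminus\{m\},m,a)$), but you then defer exactly the work that constitutes the theorem --- your closing sentence, ``carrying out this case analysis, which is the substance of West's Theorems 4.2.10 and 4.2.14, completes the induction,'' concedes that the heart of (ii) is unproved. Worse, the first of your two proposed escape routes is flawed as stated: saturating $B$ over \emph{all} entries preceding $c$ with values in $(a,c)$ can destroy uninterruptedness by widening the positional span of $B$ past an entry exceeding $c$. For example, in $\pi=263514$ with the uninterrupted pattern $(\{3\},5,1)$, your saturation adjoins $2$ and creates the interrupted subsequence $2\,6\,3$ with $6>c$. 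The repair is to saturate only \emph{within} the positional span of the original $B$ (this leaves the span, hence uninterruptedness, intact); writing $B^+$ for that saturation and $m^+=\max B^+$, one can check that the single candidate $(B^+\setminus\{m^+\},m^+,a)$ is always an \ufpoo\ of order at least $k-1$ in $S(\pi)$ --- the key point, which uses the uninterrupted hypothesis, being that after the pass any entry $y>m^+$ lands either before all of $B^+$ or after all of it, never strictly between two of its elements --- so no ``promotion'' branch is needed at all. None of this verification appears in your write-up (your parenthetical claim that $c$ is ``sorted past $a$'' is also false when an entry larger than $c$ separates $c$ from $a$, though nothing uses it). As it stands you have proved (i) and reduced (ii) to an unproved claim.
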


\begin{table}
  $\begin{array}{l|c|l} \hline\hline
  &&\\[-1ex]
  \mbox{Case} &\mbox{Type} &\mbox{Number} \\[-1ex]
  &&\\\hline\hline
  \multicolumn{3}{l}{} \\
  \multicolumn{3}{l}{\pi_n=n} \\[1ex] \hline
  &&\\[-1ex]
  \mbox{1(a)} & \st (n-1)2n & (n-3)! \\[.8ex]
  \mbox{1(b)} & \st (n-1)\q 1n & (n-3)! \\[.8ex]
  \mbox{1(c)} & \st (n-1)1\q n & (n-3)! \\[.8ex]
  \mbox{1(d)} & \st (n-1) \st (n-3)\st (n-2)1n & (n-3)!/2 \\[.8ex]
  \mbox{1(e)} & \st (n-2)1(n-1)n  & (n-4)! \\[-1ex]
  &&\\\hline
  \multicolumn{3}{l}{}\\
  \multicolumn{3}{l}{\pi_{n-1}=n} \\[1ex] \hline
  &&\\[-1ex]
  \mbox{2(a)} & \st n3 & (n-2)! \\[0.8ex]
  \mbox{2(b)} & 
  \st (n-1)1n\q  &  (n-5)(n-4)! \\[0.8ex]
  \mbox{2(c)} & 
  \st (n-2)1n(n-1) &  (n-4)! \\[-1ex]
  &&\\\hline
  \multicolumn{3}{l}{}\\
  \multicolumn{3}{l}{\pi_{n-2}=n} \\[1ex] \hline
  &&\\[-1ex]
  \mbox{3(a)} & 
  \st n2\q & (n-3)(n-3)! \\[0.8ex]
  \mbox{3(b)} & 
  \st n\q 2 & (n-3)(n-3)! \\[-1ex]
  &&\\\hline
  \multicolumn{3}{l}{}\\
  \multicolumn{3}{l}{\pi_{n-3}=n} \\[1ex] \hline
  &&\\[-1ex]
  \mbox{4(a)} & 
  \st n\q \q 1, \mbox{ but not } \st n(n-2)(n-1)1 &  (n-2)!-(n-4)! \\[0.8ex]
  \mbox{4(b)} & \st n \q 1 \q & (n-2)! \\[0.8ex]
  \mbox{4(c)} & \st n 1\q \q  & (n-2)!  \\[0.8ex]
  \mbox{4(d)} & \st n(n-2)(n-1)2 & (n-4)! \\[-1ex]
  &&\\\hline
  \multicolumn{3}{l}{}\\
  \multicolumn{3}{l}{\pi_{n-i}=n \mbox{ and }i> 3}\\[1ex] \hline
  &&\\[-0.9ex]
  \mbox{5(a)} 
  & \st n A (n-2) B(n-1)2 , \mbox{ where $A\cup B\neq \emptyset$} 
  & (n-2)!/2-(n-4)! \\[0.8ex]
  \mbox{5(b)} & \st n\st (n-3)\st (n-1)(n-2)1 & (n-3)!/2 \\[0.8ex]
  \mbox{5(c)} & 
  \st n \st (n-1) \st (n-3) \st (n-2)1 & (n-2)!/6 \\[0.8ex]
  \mbox{5(d)} & \st (n-1) \st n \st\!\left\{\!\!\!
  \begin{array}{c} 
    (n-3) \st (n-4) \\
    (n-4) \st (n-3) 
  \end{array}
  \!\!\!\right\}\!\st (n-2)1 & (n-2)!/12 \\[2ex]
  \mbox{5(e)} & \st n \st (n-2) \st (n-1)\!\left\{\!\!\!
  \begin{array}{c} 
    1\q \\ \q 1
  \end{array}\!\!\!\right\} &  (n-4)(n-3)!  \\[2ex]
  \mbox{5(f)} &\st n\st (n-3)\st (n-1)1(n-2) &(n-3)!/2\\[0.8ex] 
  \mbox{5(g)} & \st n \st (n-3) \st (n-2)1(n-1)&(n-3)!/2 \\[0.8ex] 
  \mbox{5(h)} & \st (n-2)\st n\st\!\left\{\!\!\!
  \begin{array}{c} 
    (n-3)\st (n-4)\\
    (n-4)\st (n-3) 
  \end{array}\!\!\!\right\}\!\st (n-1)1 & (n-2)!/12 \\[-1ex]
  &&\\\hline
  \end{array}$\bigskip
  \caption{Permutations in $\exactly{n}{n-3}$}
  \label{thelist}
\end{table}

\begin{proof}[Proof of Theorem~\ref{main}]
  Let $\pi \in \sym_n$. Using the contrapositive of (i) and (ii) in
  Lemma~\ref{lemma}, if $\ssc(\pi)=n-3$, then $\pi$ contains a \fpoo\
  $n-4$ and does not contain an \ufpoo\ $n-3$. The \fpoo\ $n-4$ must
  be witnessed by entries that appear in some two of the positions
  $n-3$, $n-2$, $n-1$ or $n$ of the permutation.  We will first
  condition on the position of the largest entry in the permutations,
  then condition on those permutations that contain a forbidden
  pattern of the required order, and finally single out those
  permutations that are in \exactly{n}{n-3}.
  
  Suppose $\pi_n=n$. A permutation $\pi=\pi' n$ is a member of
  \exactly{n}{n-3} precisely when $\pi'$ is a member of
  $\exactly{n-1}{n-3}$. Thus entries 1(a)--1(e) of Table~\ref{thelist}
  immediately follow from Lemma~\ref{yada2}.

  Suppose $\pi_{n-1}=n$. We must have $\pi_n \geq 3$, for otherwise,
  by Lemmas~\ref{yada1} and~\ref{yada2}, we would have
  $\pi\in\exactly{n}{n-1}$ or $\pi\in\exactly{n}{n-2}$. If $\pi_n=3$
  then the permutation is in $\exactly{n}{n-3}$, and hence we get
  2(a). If $k=\pi_n>3$ then it is impossible for $(n,k)$ to witness a
  forbidden pattern $F$ of order $n-4$ since there could be at most
  $n-k-1<n-4$ elements in $F$. Thus the forbidden pattern $F$ must be
  witnessed by $(\pi_{n-3},\pi_{n-2})$. As there are now $n-2$ values
  from which to form the forbidden pattern $F$, we are forced to
  choose the extreme values from this set as the values for
  $(\pi_{n-3},\pi_{n-2})$. Consequently, if $k=n-1$ then we must have
  $(\pi_{n-3},\pi_{n-2})=(n-2,1)$, giving case 2(c).  Otherwise
  $3<k<n-1$ and $(\pi_{n-3},\pi_{n-2}) =(n-1,1)$ which gives case
  2(b). One easily verifies that all such permutations are in
  $\exactly{n}{n-3}$.

  Suppose $\pi_{n-2}=n$. By Lemma~\ref{yada2} the value 1 cannot be to
  the right of $n$. Let $k$ be the smaller of the two values
  $\pi_{n-1}$ and $\pi_{n}$. If $k>2$ then the pair $(n,k)$ witnesses
  a \fpoo\ at most $n-k-1-1\,<\,n-4$. For the same reason,
  $(\pi_{n-1},\pi_{n})$ cannot witness a \fpoo\ $n-4$. So exactly one
  of the entries to the right of $n$ must be 2, giving 3(a) and (b).
  One easily verifies that all such permutations are in
  $\exactly{n}{n-3}$.

  Suppose $\pi_{n-3}=n$. If 1 is to the right of $n$ in $\pi$ then
  $(n,1)$ witnesses a \fpoo\ $n-4$. 
  However, if $\pi \in \glob{\st n(n-2)(n-1)1}$, it has one of the forms
  given in Lemma~\ref{yada2}.
  In all other cases the
  permutation is in $\exactly{n}{n-3}$. This gives 4(a), (b) and (c).

  Alternatively, if 1 is to the left of $n$, let $k$ be the value of
  the smallest entry to the right of $n$. Since there are at most
  $n-5$ entries to the left of $n$ in $\pi$ that take values between
  $k$ and $n$, the pair $(n,k)$ cannot witness a \fpoo\ $n-4$.  In
  order for $\pi$ to contain a forbidden pattern $F$ of order $n-4$ it
  must be witnessed by the pair $(\pi_{n-1},\pi_{n})$ and have as the
  block of $n-4$ values the value $\pi_{n-2}$ along with the $n-5$
  elements to the left of $n$ that are not 1. Thus $\pi_n=2$,
  $\pi_{n-1}=n-1$ and
  $$\pi\in\glob{\st nk (n-1)2}
  $$ 
  for some $k\neq 1$.  One easily checks
  that the only value of $k$ for which $\pi \in \exactly{n}{n-3}$ is
  $k=n-2$. This gives 4(d).

  Suppose $\pi_{n-i}=n$ for some $i>3$. 
  Then $(n,k)$ can never witness a \fpoo\ $n-4$.
  In the remainder of the proof, we condition on the relative
  positions of $(n-2)$, $(n-1)$ and $n$. If $\pi = A n B (n-1) C (n-2)
  D$ where $|B\cup C \cup D|\geq 2$, then
  $$S(\pi) = S(A) S(B) S(C) S(D)(n-2)(n-1)n.$$
  This gives
  \begin{align*}
    {\pi\in \exactly{n}{n-3}}
    &\iff S(A)S(B) S(C) S(D)\in \exactly{n-3}{n-4} \\
    &\iff S(A)S(B)S(C)S(D) \in \glob{*(n-3)1}.
  \end{align*}
  Since $|B\cup C \cup D|\geq 2$, the ways in which this can happen
  are restricted to 

  \begin{itemize}
  \item[(i)] $D=1$ and $n-3 \in C$, giving case 5(c), 
  \item[(ii)]  $D=1$, $C=\emptyset$ and $n-3 \in B$, giving case 5(b),
  \item[(iii)] $D=\emptyset$, $C=1$ and $n-3 \in B$, giving case 5(f).
  \end{itemize}

\noindent
  If $\pi = A n B (n-2) C (n-1) D$ where $|B\cup C \cup D|\geq 2$,
  then 
  $$
  S(\pi) = S(A) S(B) S(C) (n-2) S(D) (n-1) n.
  $$ 
  This implies that $\pi$ belongs to $\exactly{n}{n-3}$ \IFF\ 
  $$S(A)S(B)S(C)(n-2)S(D) \in \exactly{n-2}{n-4}.
  $$ If $D=\emptyset$ then $\pi$ belongs to $\exactly{n}{n-3}$
  \IFF\ $S(A) S(B) S(C)$ belongs to $\exactly{n-3}{n-4}$, which
  happens \IFF\ $C=1$ and $(n-3) \in B$, from which we get 5(g).
  Otherwise $ S(A) S(B) S(C) (n-2) S(D)$
  belongs to the types listed in Lemma~\ref{yada2}. There are 3 cases to
  consider:
  \begin{itemize}
  \item[(i)] $S(D)=2$, so $\pi = A n B (n-2) C (n-1)2$ where
    $B\cup C \neq \emptyset$, giving case 5(a); 
  \item[(ii)] $S(D)=1k$, so $D=1k$ or $D=k1$, giving case 5(e); and
  \item[(iii)] it is not possible that $S(A) S(B) S(C) (n-2) S(D)$
    matches the last type in Lemma~\ref{yada2} since this would mean
    that $S(D)$ ends in 1, its smallest entry.
  \end{itemize}

  If $\pi = A (n-1) B n C (n-2) D$ where $|C\cup D|\geq 3$, then a
  \fpoo\ $n-4$ can only be witnessed by the pair $(n-2,1)$.
  Furthermore, all elements $2$ through $n-3$ must be to the left of
  $n-2$, so $\pi_{n-1}=n-2$ and $\pi_n=1$, that is $\pi = A(n-1) B n C
  (n-2)1$ where $|C|\geq 2$. From this we have $S^2(\pi) =
  S(S(A)S(B)) S(S(C)1) (n-2)(n-1)n$ and
  \begin{align*}
    \pi \in \exactly{n}{n-3} & \iff
    \pi' = S(S(A)S(B)) S(S(C)1) \in \exactly{n-3}{n-5}.
  \end{align*}
  The conditions on $\pi'$ (and therefore $\pi$) are easily derived by
  comparing $\pi '$ to the types in Lemma~\ref{yada2}. Since $|C|\geq
  2$ one cannot have $(n-3)2$ as a suffix of $S(S(C)1)$. Similarly,
  $(n-3)1k$ and $(n-3)k1$ cannot be suffixes of $S(S(C)1)$. Also, it
  is only possible that 
  $$\pi'\in\glob{\st (n-3)\st (n-5)\st (n-4)1}
  $$
  if $C=\emptyset$, which is not allowed. The only possibility for
  $\pi'$ is that it has $(n-4)1(n-3)$ as a suffix, and so $n-4,n-3 \in
  C$ since $|C|\geq 2$.  Under these conditions, $S^2(\pi)$ has suffix
  $(n-4)1(n-3)(n-2)(n-1)n$ and $\pi \in \exactly{n}{n-3}$.  Thus
  $D=1$ and $n-4,n-3 \in C$, from which we get case 5(d).

  If $\pi = A (n-2) B n C (n-1) D$ where $|C\cup D|\geq 3$ then
  $$S(\pi) = S(A) S(B) (n-2) S(C)S(D) (n-1)n.
  $$ Now, $\pi$ belongs to $\exactly{n}{n-3}$ \IFF\ $\pi' =
  S(A)S(B)(n-2)S(C)S(D)$ belongs to $\exactly{n-2}{n-4}$. Since
  $|C\cup D|$ is at least 3, the entry $n-2$ has at least 3 entries to
  its right in $\pi'$.  By comparing this to the possible types that
  it may take in Lemma~\ref{yada2}, we find that the only possibility
  is
  $$\pi' \in\glob{\st (n-2) \st (n-4)\st (n-3)1}.
  $$
  This happens \IFF\ $D=1$ and $n-4,n-3 \in C$, which gives case 5(h).

  For the two final cases in which $n-2$ and $n-1$ are to the left of
  $n$, it is not possible for $\pi$ to contain a \fpoo\ $n-4$, since
  at least one of $n-2$, $n-1$ and $n$ is needed in order to witness
  such a pattern, and none of them are in the rightmost four positions
  of the permutation.

  The number of permutations of each type is shown in column three of
  Table~\ref{thelist}. Adding these gives $(47n^2-223n+240)(n-4)!/6$
  which may be be rewritten as in formula (\ref{threenumbers}).
\end{proof}

\begin{corollary}
  For $n\geq 6$, the collection of $(n-4)$-stack sortable permutations
  in $\sym_n$ are those permutations that are not of the types listed
  in Lemma~\ref{yada1}, Lemma~\ref{yada2} or Table~\ref{thelist}. The
  number of these is $(n-4)!(3n^4-18n^3-4n^2+158n-192)/3$.
\end{corollary}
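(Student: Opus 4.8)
The plan is to reduce the corollary to the partition identity from the Background section together with the three enumeration results already in hand. First I would specialise
$$
\sortable{n}{k} = \sym_n - \left(\exactly{n}{k+1}\cup\dots\cup\exactly{n}{n-1}\right)
$$
to $k = n-4$. Since $\sortable{n}{n-1}=\sym_n$, every permutation of $\sym_n$ has complexity at most $n-1$, so the union on the right collapses to the three sets $\exactly{n}{n-3}$, $\exactly{n}{n-2}$ and $\exactly{n}{n-1}$. Hence a permutation is $(n-4)$-stack sortable \IFF\ it lies in none of them. The characterisation is then immediate: $\exactly{n}{n-1}$ is the set described in Lemma~\ref{yada1}, $\exactly{n}{n-2}$ is the set in Lemma~\ref{yada2}, and $\exactly{n}{n-3}$ is Table~\ref{thelist} by Theorem~\ref{main}. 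This proves the first assertion, valid for $n\geq 6$, the binding hypothesis being the one inherited from Theorem~\ref{main}.

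For the cardinality I would use that the sets $\exactly{n}{j}$ are pairwise disjoint, since complexity assigns each permutation a unique value; thus no inclusion--exclusion is needed and
$$
|\sortable{n}{n-4}| = n! - |\exactly{n}{n-1}| - |\exactly{n}{n-2}| - |\exactly{n}{n-3}|.
$$
Substituting $|\exactly{n}{n-1}| = (n-2)!$, $|\exactly{n}{n-2}| = (n-3)!\,(7n-12)/2$, and $|\exactly{n}{n-3}| = (47n^2-223n+240)(n-4)!/6$ (the form obtained at the close of the proof of Theorem~\ref{main}, equivalent to expression~(\ref{threenumbers})), I would factor out $(n-4)!$ and write $n! = (n^4-6n^3+11n^2-6n)(n-4)!$. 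Placing the bracketed expression over the common denominator $6$, its numerator collapses to $6n^4-36n^3-8n^2+316n-384 = 2(3n^4-18n^3-4n^2+158n-192)$, which yields the stated count $(n-4)!\,(3n^4-18n^3-4n^2+158n-192)/3$.

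The only computation of substance is this polynomial simplification, and it is entirely routine; there is no genuine obstacle, since both parts follow mechanically once the disjointness of the $\exactly{n}{j}$ is noted. To guard against a transcription slip in any of the three input cardinalities, I would cross-check the final polynomial at $n=6$, where it returns $408$; this value must agree with the number of $2$-stack sortable permutations in $\sym_6$, which is independently known.
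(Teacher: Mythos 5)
Your proposal is correct and follows the same route the paper takes (implicitly, since the corollary is stated without a separate proof): subtract the cardinalities from Lemma~\ref{yada1}, Lemma~\ref{yada2} and Theorem~\ref{main} from $n!$, using the disjointness of the sets $\exactly{n}{j}$, and simplify; your polynomial arithmetic checks out. The cross-check at $n=6$ giving $408$, the known number of $2$-stack sortable permutations in $\sym_6$, is a nice confirmation.
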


It is now straightforward to write down (a lengthy expression for) the
descent polynomial of the $(n-4)$-stack sortable permutations, that
is, the polynomial whose $k$-th coefficient is the number of
$(n-4)$-stack sortable permutations with exactly $k$ descents (a
descent in $\pi=a_1a_2\dots a_n$ is an $i$ such that $a_i>a_{i+1}$).

The following conjecture is based on computer generated data for
$n\le13$.

\begin{conjecture}
  For all $n\geq 8$,
  the number of permutations in $\exactly{n}{n-4}$ is
  \begin{equation*}
    \frac{(n-5)!}{10}
    \left( 
    854\binom{n-8}{3}+ 5099\binom{n-8}{2} + 12545\binom{n-8}{1} + 16130 
    \right), 
  \end{equation*}
  or equivalently, the number of $(n-5)$-stack sortable permutations is
  $$\frac{(n-5)!}{60}\left(
  60n^5 - 600n^4 + 506n^3 + 11241n^2 - 38369n + 34236
  \right).
  $$
\end{conjecture}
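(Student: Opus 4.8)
The plan is to follow the proof of Theorem~\ref{main} one level deeper. By the contrapositives of the two parts of Lemma~\ref{lemma}, a permutation $\pi\in\sym_n$ lies in $\exactly{n}{n-4}$ \IFF\ it contains a \fpoo\ $n-5$ but no \ufpoo\ $n-4$. A forbidden pattern of order $n-5$ needs a block $B$ of $n-5$ values of common range placed before its witnessing pair $(c,a)$; counting the available values, and recalling that $c$ precedes $a$ while all of $B$ precedes $c$, forces the pair $(c,a)$ to occupy two of the last five positions $n-4,n-3,n-2,n-1,n$. As in Theorem~\ref{main}, I would first condition on the position of the largest entry $n$, then on the relative order of the next-largest entries, and finally isolate complexity exactly $n-4$.

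The case $\pi_n=n$ reduces at once: writing $\pi=\pi'n$ gives $S^k(\pi)=S^k(\pi')\,n$ for every $k$, so $\ssc(\pi)=\ssc(\pi')$ and hence $\pi\in\exactly{n}{n-4}$ \IFF\ $\pi'\in\exactly{n-1}{n-4}=\exactly{n-1}{(n-1)-3}$, which is exactly Table~\ref{thelist} with $n$ replaced by $n-1$. The cases $\pi_{n-1}=n$, $\pi_{n-2}=n$, $\pi_{n-3}=n$ and $\pi_{n-4}=n$ are handled by the same value-counting bookkeeping as in Theorem~\ref{main}: one locates where the small values $1,2,\dots$ sit relative to $n$, applies the counting bounds that limit how large a pattern the pair $(n,k)$ can witness, and reads off the admissible shapes.

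For $\pi_{n-i}=n$ with $i>4$, the pair $(n,k)$ can never witness a pattern of order $n-5$, so I would condition on the relative positions of the four large values $n-3,n-2,n-1,n$ and peel off applications of $S$, exactly as in the decompositions $S(\pi)=S(A)S(B)S(C)S(D)(n-2)(n-1)n$ of Theorem~\ref{main}. If peeling $k$ passes of $S$ pulls $s$ of these values into a sorted suffix, the reduced word lies in $\exactly{m}{m-d}$ (with $m=n-s$ and $d=4+k-s$); thus $d\in\{1,2,3\}$ and membership is governed respectively by Lemma~\ref{yada1}, Lemma~\ref{yada2}, or \emph{all} the types of Table~\ref{thelist}. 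Matching against each type yields the analogues of cases 5(a)--5(h), with the deeper $\exactly{m}{m-3}$ sub-cases --- absent from Theorem~\ref{main}, since there the analogous shift gives only $d\leq 2$ --- supplying the genuinely new families. Summing the cardinalities of all resulting types and simplifying should produce the claimed degree-five polynomial.

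The hard part will be the combinatorial explosion. With four large values whose relative order now matters (against three in Theorem~\ref{main}), and with reduced words matched against the twenty-odd types of Table~\ref{thelist} (against the five shapes of Lemma~\ref{yada2}), the number of sub-cases grows sharply, and the $\exactly{m}{m-3}$ reductions must themselves be analysed through a further pass of $S$. Verifying that the resulting families are pairwise disjoint, and that their total collapses to $\frac{(n-5)!}{60}\bigl(60n^5-600n^4+506n^3+11241n^2-38369n+34236\bigr)$, is the delicate, error-prone endpoint; the hypothesis $n\geq 8$ presumably marks where this case structure first stabilises.
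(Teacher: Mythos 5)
The statement you are trying to prove is, in the paper, only a conjecture: the authors offer no proof at all, merely a verification by computer for $n\le 13$. So there is no argument of theirs to compare yours against, and the real question is whether your proposal constitutes a proof. It does not. What you have written is a roadmap that mirrors the proof of Theorem~\ref{main} one level deeper, which is indeed the natural strategy, but every step that carries quantitative content is deferred: you never write down the families of permutations analogous to the rows of Table~\ref{thelist}, never establish that each listed family really lies in $\exactly{n}{n-4}$ and that nothing is missed, never check disjointness, and never compute or sum the cardinalities that would have to collapse to the claimed degree-five polynomial. The sentence ``reads off the admissible shapes'' is precisely where all the work of such a proof lives, and you acknowledge yourself that this part is an error-prone explosion you have not carried out. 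A proposal whose substantive content is ``do what Theorem~\ref{main} does, but more of it'' cannot settle a statement that the authors themselves could only conjecture.

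There is also a logical overstatement at the outset. Lemma~\ref{lemma} gives two one-sided implications: $\ssc(\pi)\le k$ if $\pi$ has no \fpoo\ $k$, and $\ssc(\pi)>k$ if $\pi$ has an \ufpoo\ $k$. Taking contrapositives, $\ssc(\pi)=n-4$ implies that $\pi$ contains a \fpoo\ $n-5$ and contains no \ufpoo\ $n-4$; the converse is not asserted by the lemma, and your ``\IFF'' is unjustified. The paper's own proof of Theorem~\ref{main} is careful about exactly this point: the pattern conditions are used only to narrow the candidates, after which membership in $\exactly{n}{n-3}$ is verified separately in each case (``one easily verifies\dots''). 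Your plan would need the same final verification in every branch, on top of the uncompleted case analysis, before it could be called a proof of the conjectured formula.
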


We end with a conjecture about the form of an expression for the
number of permutations needing exactly $n-k$ stack sorts.  This has
been verified for all $n\le14$ and all relevant $k$.

\begin{conjecture}
  For all $n\geq 2k$,
  the number of permutations in $\exactly{n}{n-k}$ may be written as
  \begin{equation*}
    \frac{(k-1)!(n-k-1)!}{\big(2(k-1)\big)!} 
    \sum_{i=0}^{k-1} a_i \binom{n-2k}{i} 
  \end{equation*}
  where $a_i \in \mathbb{N}$.
\end{conjecture}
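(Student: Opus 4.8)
The plan splits the claim into three assertions: that $|\exactly{n}{n-k}|$ equals $(n-k-1)!$ times a polynomial in $n$ of degree at most $k-1$; that the factor $(k-1)!/(2(k-1))!$ exactly clears the denominators of that polynomial; and that the coefficients $a_i$ in the basis $\binom{n-2k}{i}$ are non-negative. I would begin, as in the proof of Theorem~\ref{main}, by localising. By Lemma~\ref{lemma}, a permutation lies in $\exactly{n}{n-k}$ precisely when it contains a \fpoo\ $n-k-1$ but no \ufpoo\ $n-k$. A \fpoo\ $n-k-1$ needs $n-k-1$ block entries strictly to the left of its witness $(c,a)$, so the witness occupies two of the final $k+1$ positions, and all ``distinguished'' entries — the witness, the largest values $n,n-1,\dots$, and the few entries not belonging to the block — are confined to a tail window of width $O(k)$. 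The $n-k-1$ remaining small generic entries form the block and may be permuted freely among themselves, which is the source of the common factor $(n-k-1)!$.

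The engine is the elementary identity $\ssc(\pi)=1+\ssc(S(\pi))$ for $\pi\neq\id$, immediate from the definition of $\ssc$ and already used implicitly in case~5 of Theorem~\ref{main}. I would induct on $k$, taking Lemmas~\ref{yada1} and~\ref{yada2} as the base cases $k=1,2$, and classify permutations by the positions of the largest entries. Whenever the relative arrangement of the top values lets one or more passes of $S$ deposit a run of them at the tail, membership in $\exactly{n}{n-k}$ becomes equivalent to membership of a shorter permutation on $n-j$ letters in $\exactly{n-j}{(n-j)-k'}$ with $k'<k$ determined by the configuration, a set governed by the induction hypothesis. The boundary configurations, in which the top values sit so near the tail that no clean peeling occurs (the analogues of cases 1--4 and the first subcases of case~5), are compared directly against the characterisations in Lemmas~\ref{yada1} and~\ref{yada2}.

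For each configuration the count is a constant times a factorial $(n-r)!$ with $2\le r\le k+1$, the witness alone pinning at least two tail positions; dividing by $(n-k-1)!$ turns each into a polynomial in $n$ of degree $k+1-r\le k-1$, and summing the finitely many configurations preserves both the factorial prefactor and the degree bound, giving the first assertion. For integrality and positivity I would look for a combinatorial reading of the normalised quantity $\frac{(2k-2)!}{(k-1)!}\cdot\frac{|\exactly{n}{n-k}|}{(n-k-1)!}=\sum_{i=0}^{k-1}a_i\binom{n-2k}{i}$; since $\frac{(2k-2)!}{(k-1)!}=\binom{2k-2}{k-1}(k-1)!$ is exactly the count of orderings of the distinguished tail entries compatible with the stack-sorting constraints, the natural target is to realise $a_i$ as the number of admissible tail windows with $i$ ``sliding'' positions, with $\binom{n-2k}{i}$ counting the placements of those positions among the $n-2k$ unconstrained slots. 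Such a reading would make $a_i\in\mathbb{N}$ transparent and re-prove the degree bound simultaneously.

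The hard part is twofold. First, a proof valid for every $k$ cannot, unlike the explicit lists for $k\le3$, enumerate configurations by hand: one needs a description of the admissible tail windows uniform in $k$ — in effect a transfer-matrix or generating-function encoding of how $S$ and the forbidden-pattern conditions act on the last $O(k)$ positions — together with sharp control of the multiplicities with which a reduced permutation lifts back to $\pi$, since a naive count of these lifts already overshoots the degree bound and the no-uninterrupted-pattern condition must be invoked to cut them down. Second, and more stubbornly, positivity of the $a_i$ does not follow from polynomiality: it encodes the precise combinatorics hidden in the factor $\binom{2k-2}{k-1}$, and I expect the genuine obstacle to be producing the combinatorial interpretation of the $a_i$ that upgrades rational coefficients to non-negative integers. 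Integrality alone might be forced by bounding denominators through the induction, but the positivity is the step I would expect to resist any purely bookkeeping argument.
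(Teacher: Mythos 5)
The statement you are addressing is the paper's final conjecture: the authors give no proof of it, only computational verification for all $n\le 14$, so there is no argument in the paper to compare yours against. Your text is likewise not a proof but a plan, and you say so yourself; as it stands it leaves the conjecture exactly as open as the paper does. The parts that are solid are the framing: the recursion $\ssc(\pi)=1+\ssc(S(\pi))$ for $\pi\neq\id$, Lemma~\ref{lemma} forcing the witness of a \fpoo\ $n-k-1$ into the last $k+1$ positions, and induction on $k$ peeling off large entries via $S$ --- this is indeed the method the paper uses for $k=3$, and extrapolating it is the natural first move.

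The concrete gaps are these. First, your degree-bound step asserts that each configuration contributes a constant times a factorial $(n-r)!$; that is already false for $k=3$, where cases 2(b), 3(a), 3(b), 4(a), 5(a) and 5(e) of Table~\ref{thelist} contribute terms like $(n-5)(n-4)!$ and $(n-3)(n-3)!$, i.e.\ polynomial multiples of factorials coming from free letters ($\q$) and movable values. The degree bound $\le k-1$ after dividing by $(n-k-1)!$ is plausible but needs an argument that controls these polynomial factors uniformly in $k$, which you do not supply; your own remark that a naive count of lifts ``overshoots the degree bound'' concedes this. Second, the number of configurations itself grows with $k$, so an induction that terminates in a closed form requires the uniform (transfer-matrix or generating-function) description of admissible tail windows that you name as the hard part but do not construct. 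Third, the specific normalising factor $(k-1)!/\big(2(k-1)\big)!$ and the non-negativity and integrality of the $a_i$ are the actual content of the conjecture; your proposed combinatorial reading of $\binom{2k-2}{k-1}(k-1)!$ is a guess with no supporting bijection, and without it neither $a_i\in\mathbb{Z}$ nor $a_i\ge 0$ follows from anything you write. So the proposal identifies a reasonable research programme, consistent with the paper's $k\le 3$ computations and its $k=4$ conjecture, but it does not prove the statement, and the statement remains a conjecture.
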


\end{document}